\documentclass[12pt]{article}
\usepackage{amsmath,amsfonts,amssymb,amsthm,amscd}
\title{Remarks on Banach spaces determined by their finite dimensional subspaces}
\date{\today}
\author{Karim Khanaki\thanks{K.Khanaki@gmail.com  \ \ \  Partially supported by IPM grants 93030032 and 93030059}\\Arak University of Technology
 }

\newtheorem{Theorem}{Theorem}[section]

\newtheorem{Definition}[Theorem]{Definition}
\newtheorem{Notation}[Theorem]{Notation}
\newtheorem{Remark}[Theorem]{Remark}
\newtheorem{Lemma}[Theorem]{Lemma}
\newtheorem{Corollary}[Theorem]{Corollary}
\newtheorem{Fact}[Theorem]{Fact}

\newtheorem{Question}[Theorem]{Question}

\newcommand{\sslash}{\mathbin{\mkern-6mu/\mkern-6mu/\mkern-5mu}}

\begin{document}
\maketitle

\begin{abstract} A separable Banach space $X$ is said to be {\em
finitely determined} if for each separable space $Y$ such that
$X$ is finitely representable (f.r.) in $Y$ and $Y$ is f.r. in
$X$ then $Y$ is isometric to $X$. We provide a direct proof
(without model theory) of the fact that
 every finitely determined space $X$ (isometrically) contains
every (separable) space $Y$ which is finitely representable in
$X$. We also point out how a similar argument proves the
Krivine-Maurey theorem on stable Banach spaces, and give the
model theoretic interpretations of some results.
\end{abstract}

\section{Introduction}
This paper is a kind of companion-piece to \cite{K}, although here
we are mainly concerned with direct proofs (without any use of
model theory) of some results of \cite{K} and some new results.
The main results are Theorem~\ref{compact->ell_p} on connections
of isometry groups and existence of classical sequence spaces,
Theorem \ref{fr determined->compact} on isometry group of
`finitely determined spaces',  Corollary~\ref{main1} showing that
the finitely determined spaces contain `good' subspaces, and
Remark~\ref{stable} pointing out that the Krivine-Maurey theorem
has an equivalent formulation in the language of general topology.

We study Banach spaces which have `large' isometry groups. These
spaces are very `symmetrical' and have `good' substructures.
Recall that a  separable Banach space $X$ is said to be {\em
determined by its first order theory} (or {\em
$\aleph_0$-categorical}) provided that $X$ is isometric to every
space $Y$ such that ultrapowers $X_{\cal
U}$ and $Y_{\cal U}$ are isometric for some ultrafilter $\cal U$.  
In fact, a space is determined by its first order theory if it is
the only separable model of its first order theory in the sense
of Continuous Logic (see \cite{BBHU}). In \cite{K}, it is shown
that every $\aleph_0$-categorical space contains some $\ell_p$ or
$c_0$. In this paper we show that every space which is determined
by its finite dimensional subspaces (see Definition~\ref{finitely
determined} below) contains $\ell_p$ for each $p$ in its
spectrum. In fact the latter is a consequence of the prior, and
in this paper we give direct proofs of them and some new results.

One reason for restricting our attention to direct proofs
(without model theory) is to make the paper more accessible to
Banach-theorists and other interested readers.

\subsection{Isomery groups and classical sequences}
 Let $(X,d)$ be a complete metric space and $G$ a
closed subgroup of isometry group $\text{Iso}(X)$ with the
natural topology (i.e. the topology pointwise convergence). For
$x\in X^{\Bbb N}$, we let $[x]$ (or $[x]_G$, if there is a risk of
ambiguity) denote the closure of the orbit $x$ with respect to
the product topology on $X^{\Bbb N}$. We fix a metric inducing
the product topology and it is denoted by $d$ again.

\begin{Definition}[\cite{BT}] \label{oligomorphic}  {\em Let $(X,d)$ be a complete metric space and $G$ a closed
subgroup of isometry group $\text{Iso}(X)$.  We equip the set of
orbit closures
$$X\sslash G=\{[x]_G:x\in X\}$$
with the metric induced from $X$
$$d([x],[y])=\inf\{d(u,v):u\in[x],v\in[y]\}.$$
We say that the action $G$ on $X$ is  {\em approximately
oligomorphic} (or short, $G$ is {\em approximately oligomorphic})
if $X^n\sslash G$ is compact for all $n$. }
\end{Definition}

\begin{Remark} {\em (i) Since elemnts of $G$ are isometry, then
$[x]\cap[y]\neq\emptyset$ if and only if  $[x]=[y]$.
\newline
 (ii) Since $G$ is a subgroup of isometries, $d$ is a
metric on $X\sslash G$., i.e. the triangle inequality holds and
$d([x],[y])=0$ implies $[x]=[y]$.
\newline
 (iii) $G$ is approximately oligomorphic iff
$X^{\Bbb N}\sslash G$ is compact (see the discussion after
Definition~2.1 in \cite{BT}).
\newline (iv) If $(X,d)$ is
complete then $X^{\Bbb N}\sslash G$ is complete. So the latter
space is compact iff it is totally bounded. }
\end{Remark}

Now we can prove some results, after introducing some notations:

\begin{Notation} {\em Let $X$ be a Banach spaces and
$x_1,\ldots,x_n,y_1,\ldots,y_n\in X$. For $\epsilon>0$, we write
$(x_1,\ldots,x_n)\sim_\epsilon(y_1,\ldots,y_n)$ if for each
$r_1,\ldots,r_n\in \Bbb R$,
$$ (1-\frac{1}{\epsilon}) \|\sum_1^n r_i y_{i}\| \le \|\sum_1^n r_i x_{i}\| \le  (1+\frac{1}{\epsilon}) \|\sum_1^n r_i y_{i}\|.$$
We write $(x_1,\ldots,x_n)\sim_\epsilon (e_1,\ldots,e_n)$ (where
$(e_n)$ is the standard basis of $\ell_p$) if for each
$r_1,\ldots,r_n\in \Bbb R$,
$$(1-\frac{1}{\epsilon})\big(\sum_1^n |r_i|^p\big)^{\frac{1}{p}} \le \|\sum_1^n r_i x_{n,i}\| \le (1+\frac{1}{\epsilon})\big(\sum_1^n
|r_i|^p\big)^{\frac{1}{p}}.$$ }
\end{Notation}

We give an easy lemma.

\begin{Lemma} Let $X$ be a Banach space and
$x_1,\ldots,x_n,y_1,\ldots,y_n\in X$ such that
$(x_1,\ldots,x_n)\sim_\epsilon(y_1,\ldots,y_n)$  for some
$\epsilon>0$. Then for every  {\em linear} isometry $g$ on $X$,
$(g(x_1),\ldots,g(x_n))\sim_\epsilon(y_1,\ldots,y_n)$.
\end{Lemma}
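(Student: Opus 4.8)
The plan is to reduce everything to the single observation that a linear isometry preserves the norm of every linear combination of the vectors involved. Concretely, fix arbitrary scalars $r_1,\ldots,r_n\in\R$. Because $g$ is linear, $g\big(\sum_1^n r_i x_i\big)=\sum_1^n r_i g(x_i)$, and because $g$ is an isometry (and $g(0)=0$, which follows from linearity), $\big\|\sum_1^n r_i g(x_i)\big\|=\big\|g\big(\sum_1^n r_i x_i\big)\big\|=\big\|\sum_1^n r_i x_i\big\|$.

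With that identity in hand, I would simply substitute into the defining inequalities of $\sim_\epsilon$. By hypothesis $(x_1,\ldots,x_n)\sim_\epsilon(y_1,\ldots,y_n)$ gives, for the same scalars $r_i$,
$$\Big(1-\tfrac{1}{\epsilon}\Big)\Big\|\sum_1^n r_i y_i\Big\|\le\Big\|\sum_1^n r_i x_i\Big\|\le\Big(1+\tfrac{1}{\epsilon}\Big)\Big\|\sum_1^n r_i y_i\Big\|,$$
and replacing the middle term via the norm-preservation identity yields exactly the inequalities asserting $(g(x_1),\ldots,g(x_n))\sim_\epsilon(y_1,\ldots,y_n)$. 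Since $r_1,\ldots,r_n$ were arbitrary, this establishes the claim.

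There is essentially no obstacle here: the statement is a direct consequence of linearity and norm invariance, and the only point worth flagging is that although $\sim_\epsilon$ need not be a symmetric relation, we are only altering the vectors that sit on the \emph{same} side of the inequalities (the $x_i$'s), so no symmetry is needed. I would therefore keep the proof to two or three lines.
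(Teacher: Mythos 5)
Your proof is correct and follows exactly the route the paper intends: the paper's own proof is just the one-line remark that the claim is ``immediate, since $g$ is linear and isometry,'' and your write-up simply makes that explicit via $\|\sum_1^n r_i g(x_i)\|=\|g(\sum_1^n r_i x_i)\|=\|\sum_1^n r_i x_i\|$ and substitution into the defining inequalities. No gaps; your remark about only replacing the $x_i$'s (so no symmetry of $\sim_\epsilon$ is needed) is a sensible extra precaution.
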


\begin{proof} Immadiate, since $g$ is linear and isometry.
\end{proof}

\begin{Theorem} \label{compact->ell_p}   Suppose that $X$ is a separable Banach space and
$G$ is a closed subgroup of {\em linear} isometies on $X$. If
$X^{\Bbb N}\sslash G$ is compact then $X$ contains $c_0$ or
$\ell_p$ (for some $1\le p<\infty$).
\end{Theorem}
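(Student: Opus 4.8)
The plan is to reduce to a classical Banach space fact — Krivine's theorem — and then to exploit the compactness of $X^{\Bbb N}\sslash G$ to pass from finite representability to an isometric embedding. We may as well assume $X$ is infinite dimensional: a finite dimensional $X$ contains no infinite dimensional subspace, while for any fixed $v\neq 0$ the orbit closures $[(tv,0,0,\dots)]_G$, $t\ge 0$, form an infinite, uniformly separated (hence non–totally–bounded) subset of $X^{\Bbb N}\sslash G$ because linear isometries preserve norms, so in that case the hypothesis fails and there is nothing to prove.

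First I would invoke Krivine's theorem. Since $X$ is infinite dimensional it contains a normalized basic sequence (Mazur), and Krivine's theorem provides $p\in[1,\infty]$ (with $p=\infty$ read as $c_0$) such that $\ell_p$ is finitely representable in $X$. Unwinding this in the paper's notation, together with a diagonal choice, yields for each $m$ vectors $v^m_1,\dots,v^m_m\in X$ with $(v^m_1,\dots,v^m_m)\sim_m(e_1,\dots,e_m)$. Viewing $\bar v^m=(v^m_1,\dots,v^m_m,0,0,\dots)$ as a point of $X^{\Bbb N}$, I then use that $X^{\Bbb N}\sslash G$ is compact and metrizable, hence sequentially compact: after passing to a subsequence, $[\bar v^m]_G\to[\bar w]_G$ for some $\bar w=(w_1,w_2,\dots)\in X^{\Bbb N}$.

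The heart of the argument is to show that $(w_k)$ spans an \emph{isometric} copy of $\ell_p$ (resp. $c_0$). Fix $n$ and $\delta>0$. For $m$ far out in the subsequence we have $m\ge n$, the near-isometry $(v^m_1,\dots,v^m_n)\sim_m(e_1,\dots,e_n)$, and $d([\bar v^m]_G,[\bar w]_G)$ as small as we like; since $G$ acts coordinatewise on $X^{\Bbb N}$ by $d$-isometries, the definition of the orbit metric produces $g\in G$ with $\|g(v^m_k)-w_k\|$ as small as we like for all $k\le n$. By the Lemma, $(g(v^m_1),\dots,g(v^m_n))\sim_m(e_1,\dots,e_n)$, and a routine perturbation estimate — coordinatewise error against a norm bounded below, using $\|\sum_k r_k e_k\|_{\ell_p}\ge\max_k|r_k|$ — then gives $(w_1,\dots,w_n)\sim_{1/\delta}(e_1,\dots,e_n)$ up to a harmless adjustment of $\delta$. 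Since $n$ and $\delta$ were arbitrary, $\|\sum_k r_k w_k\|=(\sum_k|r_k|^p)^{1/p}$ exactly (resp. $=\max_k|r_k|$), so $\overline{\mathrm{span}}\{w_k:k\in\Bbb N\}$ is isometric to $\ell_p$ (resp. $c_0$) and sits inside $X$.

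The step I expect to be the main obstacle — and the only place the hypothesis is genuinely used — is this last passage to the limit: compactness of $X^{\Bbb N}\sslash G$ together with the $G$-invariance of the relations $\sim_\epsilon$ (the Lemma) is precisely what allows approximate, arbitrarily-located copies of $\ell_p^n$ to be transported simultaneously by a single $g\in G$ and passed to an honest limiting copy; the perturbation bookkeeping around it is routine. If one prefers not to cite Krivine's theorem, one can instead run a Ramsey/Brunel--Sucheston extraction \emph{inside} the compact space $X^{\Bbb N}\sslash G$ to obtain a $G$-indiscernible, hence $1$-spreading, normalized sequence, re-prove the relevant instance of Krivine's dichotomy for such a sequence by the classical functional-equation argument, and then conclude with the same compactness upgrade.
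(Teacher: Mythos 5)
Your proposal follows essentially the same route as the paper: invoke Krivine's theorem to get vectors $(v^m_1,\dots,v^m_m)$ that are $(1\pm\tfrac1m)$-equivalent to the $\ell_p^m$ basis, pad them into points of $X^{\Bbb N}$, use compactness of $X^{\Bbb N}\sslash G$ to extract a limit orbit $[\bar w]$, and then use the Lemma (linear isometries preserve $\sim_\epsilon$) plus a perturbation estimate to show the coordinates of $\bar w$ span an isometric copy of $\ell_p$ (or $c_0$). The only differences are cosmetic — you use sequential compactness where the paper takes a cluster point, and you spell out the perturbation bookkeeping that the paper dismisses as ``easy to verify'' — so this is a correct proof by the paper's own method.
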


\begin{proof}
By Krivine's theorem, there is some $p\in[1,\infty]$ such that
$\ell_p$ is finitely representable in $X$. So, for each $n\in \Bbb
N$, let $x_{n,1},x_{n,2},\ldots,x_{n,n}$ be in $X$ such that
$(x_{n,1},x_{n,2},\ldots,x_{n,n})\sim_{\frac{1}{n}}(e_1,\ldots,e_n)$.
Set $x_n=(x_{n,1},\ldots,x_{n,n}, z_{n+1},z_{n+2},\ldots)$ where
$z_i$ are arbitrary in $X$. By compactness, the sequence
$([x_n])_{n\in\Bbb N}$ has a cluster point $[x]$ in $X^{\Bbb N}
\sslash  G$.  Now, it is easy to verify that
$[x]=[(x_1,x_2,\ldots)]$ contains $\ell_p$. Indeed, for each
$\epsilon>0$, there is a $[x_n]$ such that $d([x_n],[x])\le
\epsilon$. Since elements of $G$ are {\em linear} isometry, as
$(x_{n,1},\ldots,x_{n,n})\sim_\epsilon (x_1,\ldots,x_n)$ we have
$(x_1,\ldots,x_n)\sim_{2\epsilon} (e_1,\ldots,e_n)$. As
$\epsilon$ is arbitrary, the proof is completed.
\end{proof}

For non-specialists in Banach space theory, we mention that a
Banach space $X$ is said to be {\em finitely representable}
(f.r.) in another Banach space $Y$ if for each finite dimensional
subspace $X_n$ of $X$ and each number $\lambda>1$, there is an
isomorphism $T_n$ of $X_n$ into $Y$ for which
$\lambda^{-1}\|x\|\leqslant\|T_n(x)\|\leqslant\lambda\|x\|$ if
$x\in X_n$.

\begin{Remark} {\em  In the above (\ref{compact->ell_p}), by  a simlar
argument, one can show that if a Banach space $Y$  which has a
basis is f.r. in $X$, and $X^{\Bbb N} \sslash G$ is compact, then
$X$ (isometrically) contains $Y$. }
\end{Remark}

\begin{Definition} \label{finitely determined} {\em  A separable Banach space $X$ is said to be
{\em determined by its finite dimensional subspaces} (or short
{\em finitely determined}) if for each separable space $Y$ such
that $X$ is finitely representable (f.r.) in $Y$ and $Y$ is f.r.
in $X$ then $Y$ is isometric to $X$. }
\end{Definition}

Now we recall some facts from general topology. Let $X$ be a
topological space, $(x_i)_{i\in I}$ an indexed family in $X$, and
$\cal U$ an ultrafilter on $I$. For $x\in X$, we write
$\lim_{{\cal U}, i}x_i=x$ and say that $x$ is the $\cal U$-limit
of $(x_i)_{i\in I}$ if for every neighborhood $U$ of $x$, the set
$\{i\in I:x_i\in U\}$ is in $\cal U$. A basic fact in general
topology is that $X$ is compact and Hausdorff iff for every
indexed family $(x_i)_{i\in I}\in X$ and every ultrafilter $\cal
U$ on $I$, the $\cal U$-limit exists and is unique. We use this
fact in the following.

\begin{Theorem} \label{fr determined->compact}  Suppose that $X$ is determined by its finite dimentional subspaces, and $G_L$ the group of all linear isometries on $X$. Then
$X^{\Bbb N} \sslash G_L$  is compact.
\end{Theorem}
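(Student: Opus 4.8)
The plan is to use the characterization of compactness via ultrafilter limits recalled just before the statement. Since $(X^{\Bbb N}, d)$ is a complete metric space, $X^{\Bbb N}\sslash G_L$ is a metric (hence Hausdorff) space, and by completeness it suffices to show it is totally bounded; equivalently, I will verify directly that every indexed family in $X^{\Bbb N}\sslash G_L$ has a $\cal U$-limit for every ultrafilter $\cal U$. So let $([a^i])_{i\in I}$ be an indexed family in $X^{\Bbb N}\sslash G_L$, with representatives $a^i = (a^i_1, a^i_2, \ldots)\in X^{\Bbb N}$, and fix an ultrafilter $\cal U$ on $I$.

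Next I would build a candidate limit by an ``ultraproduct of finite-dimensional pieces'' construction. For each $n$, consider the sequence of finite tuples $(a^i_1,\ldots,a^i_n)_{i\in I}$ and the associated seminorms on $\Bbb R^n$, namely $i\mapsto \|\sum_{k=1}^n r_k a^i_k\|$; taking the $\cal U$-limit pointwise in $(r_1,\ldots,r_n)$ gives a seminorm $N_n$ on $\Bbb R^n$ (finiteness of the limit may require first rescaling each $a^i$ so that the tuples live in a bounded region, which one may do since $G_L$-orbits are only constrained up to the metric $d$ on $X^{\Bbb N}\sslash G_L$ — here I should be a little careful and possibly restrict to families that are ``bounded'' in the relevant sense, or quotient by the equivalence that collapses unbounded coordinates, mirroring how $d$ on $X^{\Bbb N}$ is chosen). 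These $N_n$ are coherent as $n$ grows, so they assemble into a seminorm on $\bigcup_n \Bbb R^n = c_{00}$; let $Y$ be the completion of the quotient normed space, with the images $e_1, e_2, \ldots$ of the coordinate vectors, and set $y = (e_1, e_2,\ldots)\in Y^{\Bbb N}$. By construction $Y$ has a dense subspace spanned by $(e_k)$, so $Y$ is separable; moreover every finite-dimensional subspace of $Y$ is arbitrarily well approximated (in Banach–Mazur distance) by $\mathrm{span}(a^i_1,\ldots,a^i_n)\subseteq X$ for $\cal U$-many $i$, so $Y$ is f.r. in $X$. Conversely $X$ is f.r. in $Y$: given a finite-dimensional $X_n\subseteq X$ and $\lambda>1$, one uses that $X$ is f.r. in $X$ via the identity — wait, here is where I must actually invoke something; the point is that the construction should be arranged so that the $(a^i)$ are chosen to ``fill up'' $X$, i.e. I should first replace $([a^i])$ by a subnet/subfamily or enrich each $a^i$ so that $\overline{\mathrm{span}}(a^i_k : k) = X$ for every $i$ (e.g. append a fixed dense sequence of $X$ to the tail of each $a^i$, which does not change the orbit-closure class in a way that matters). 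Then $X$ embeds almost-isometrically into $Y$ because any finite tuple from $X$ appears, up to the $\sim_\epsilon$ relation and the $G_L$-action, among the coordinates of $a^i$ for $\cal U$-many $i$.

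Having arranged that both $X$ is f.r.\ in $Y$ and $Y$ is f.r.\ in $X$, the hypothesis that $X$ is finitely determined gives a \emph{linear} isometry $\Phi : Y \to X$ (isometric Banach spaces are linearly isometric, by Mazur–Ulam together with the fact that we may assume $\Phi(0)=0$). Then $\Phi(y) = (\Phi(e_1), \Phi(e_2), \ldots)\in X^{\Bbb N}$, and I claim $[\Phi(y)]$ is the desired $\cal U$-limit of $([a^i])$ in $X^{\Bbb N}\sslash G_L$. To see this, fix $\epsilon>0$ and $n$; for $\cal U$-many $i$ the $n$-tuple $(a^i_1,\ldots,a^i_n)$ is $\sim_\epsilon$-close to $(\Phi(e_1),\ldots,\Phi(e_n))$ by the way $Y$ and $\Phi$ were built, and by Lemma (the easy lemma on linear isometries preserving $\sim_\epsilon$) this is a $G_L$-invariant statement, hence $d([a^i],[\Phi(y)])\le \epsilon'$ for $\cal U$-many $i$, where $\epsilon'\to 0$ as $\epsilon\to 0$ and $n\to\infty$ in the metric $d$ on $X^{\Bbb N}$. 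Since $\epsilon$ is arbitrary, $\lim_{{\cal U},i}[a^i] = [\Phi(y)]$, and uniqueness is automatic from the Hausdorff (metric) property. This establishes compactness.

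The main obstacle I anticipate is the bookkeeping around \emph{boundedness and normalization}: the $\cal U$-limit seminorm $N_n$ need not be finite unless the representatives $a^i$ are suitably bounded, and the metric $d$ on $X^{\Bbb N}$ inducing the product topology must be handled so that ``$d$-close'' really is controlled by finitely many coordinates being $\sim_\epsilon$-close; getting the quantifiers right so that the constructed $Y$ is genuinely separable, genuinely f.r.\ in $X$ \emph{and} genuinely has $X$ f.r.\ in it — simultaneously for the \emph{same} $Y$ — is the delicate part, and it is exactly where the hypothesis ``finitely determined'' is consumed. The rest (invoking the easy Lemma, Mazur–Ulam, and the ultrafilter characterization of compactness) is routine.
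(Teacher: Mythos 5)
Your overall strategy is recognizably the paper's argument run forwards instead of by contradiction: in both cases one forms, from the given family and ultrafilter, a separable space sitting between $X$ and an ultrapower of $X$ (your $Y$, after the ``enrichment'' step, is essentially the paper's $\hat X=\overline{\mathrm{span}}\,(X\cup\{y_m\})\subseteq X_{\cal U}$), observes that it is f.r.-equivalent to $X$, and consumes the finitely-determined hypothesis to get an isometry with $X$. The genuine gap is at the last step, where you pass from ``$(a^i_1,\ldots,a^i_n)\sim_\epsilon(\Phi(e_1),\ldots,\Phi(e_n))$ for $\cal U$-many $i$'' to ``$d([a^i],[\Phi(y)])\le\epsilon'$ for $\cal U$-many $i$.'' The quotient metric is $\inf\{d(u,v):u\in[a^i],\ v\in[\Phi(y)]\}$, so to make $d([a^i],[\Phi(y)])$ small you must exhibit a \emph{surjective linear isometry of all of $X$} carrying $(a^i_1,\ldots,a^i_n)$ coordinatewise norm-close to $(\Phi(e_1),\ldots,\Phi(e_n))$. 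The relation $\sim_\epsilon$ only says the two tuples span almost-isometric finite-dimensional subspaces; it yields an isomorphism between those spans, not an element of $G_L$. In a space whose isometry group is $\{\pm I\}$ (the generic situation), $d([a^i],[b])$ small forces $a^i_k\approx\pm b_k$ for the first few $k$, which is far stronger than $\sim_\epsilon$; the paper's Lemma gives only the easy converse direction ($\sim_\epsilon$ is $G_L$-invariant). The paper's contrapositive formulation dodges exactly this point: it never manufactures isometries of $X$ from local data, it only observes that a hypothetical global isometry $f:\hat X\to X$ would transport the $y_m$ to an actual $\cal U$-limit of the given classes. To repair your direct version you would need the same pivot: argue that if \emph{no} class $[b]$ is a $\cal U$-limit, then the space generated by $X$ and the limit points $y_m$ in $X_{\cal U}$ cannot be isometric to $X$ --- i.e., re-derive the contradiction rather than construct the limit.

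A second, smaller problem is your treatment of boundedness. Rescaling representatives is not available: $G_L$ consists of linear isometries, which preserve the norm of every coordinate, so $\|a^i_1\|$ is an invariant of the class $[a^i]$ and cannot be normalized away. If $\|a^i_1\|\to\infty$ along $\cal U$, the family has no $\cal U$-limit at all and your seminorms $N_n$ are infinite; the statement must be read on bounded tuples (in effect on $B_X^{\Bbb N}$), and the proof should restrict to that setting explicitly rather than hope the quotient absorbs it.
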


\begin{proof} Suppose, if  possible, that $X^{\Bbb N} \sslash G_L$ is not
compact. So, there is an ultrafilter $\cal U$ on $\Bbb N$ and
sequence $\{x_n\}\in X^{\Bbb N} \sslash G$ such that the $\cal
U$-limit does not exist (in $X^{\Bbb N} \sslash G$), this means
that for every $u_n\in x_n$, the $\cal U$-limit of the sequence
$\{u_n\}$ does not exist in $X^{\Bbb N}$. Let $X_{\cal U}$ be the
$\cal U$-ultrapower of $X$ (see \cite{Hei}, for the definition).
By Theorem 6.3 of \cite{Hei}, $X_{\cal U}$ is f.r. in $X$. For
each $n$,  let $(x_{n,1},x_{n,2},\ldots)\in x_n$. Then, for each
$m$, the sequence $\{x_{n,m}\}_{n=1}^\infty$ determines an element
$y_m=(x_{n,m})_{\cal U}$ in $X_{\cal U}$ (see again \cite{Hei}
for definition of elements of $X_{\cal U}$). So,
$\{y_1,y_2,\ldots\}$ is f.r. in $X$. Let $\hat X$ be the Banach
space generated by $X\cup\{y_1,y_2,\ldots\}$. Clearly, $\hat X$
and $X$ are not isometrically isomorphic. Indeed, suppose, if
possible, that $f:\hat X\to X$ is an isometrically isomorphism
and $f(y_n)=z_n$ for all $n$. Since $\lim_{{\cal U},n}x_{n,m}=
y_m$, so $(z_1,z_2,\ldots)$ is the $\cal U$-limit of the sequence
 $\{(f(x_{n,1}),f(x_{n,2}),\ldots)\}_{n=1}^\infty$, but
$(f(x_{n,1}),f(x_{n,2}),\ldots)\in x_n$ for all $n$ (note that
$f$ is a {\em linear isometry}), and  the sequence $\{x_n\}$ was
not convergent. To summarize, the space $\hat X$ is different
from $X$, but $\hat X$ is f.r. in $X$ and $X$ is f.r. in $\hat
X$. This is a contradiction.
\end{proof}

\begin{Corollary} \label{main1} Suppose that $X$ is determined by its finite dimentional subspaces. Then $X$ contains $c_0$ or $\ell_p$.
Moreover, $X$ contains every Banach space which has a basis and
is f.r. in $X$.
\end{Corollary}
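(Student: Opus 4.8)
The plan is to deduce the corollary by simply chaining together the two theorems already established together with the Remark following Theorem~\ref{compact->ell_p}. First I would observe that the hypothesis of the corollary is exactly the hypothesis of Theorem~\ref{fr determined->compact}: taking $G_L$ to be the group of all linear isometries of $X$, that theorem gives at once that $X^{\mathbb{N}} \sslash G_L$ is compact. Note that $G_L$ is a closed subgroup of $\mathrm{Iso}(X)$ whose elements are linear maps, so it is a legitimate choice of $G$ in the hypotheses of Theorem~\ref{compact->ell_p}.

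Second, I would apply Theorem~\ref{compact->ell_p} directly with this $G = G_L$. All of its hypotheses --- $X$ separable, $G$ a closed subgroup of linear isometries, and $X^{\mathbb{N}}\sslash G$ compact --- are now in place, so its conclusion gives that $X$ contains $c_0$ or $\ell_p$ for some $1 \le p < \infty$. This settles the first assertion. For the ``moreover'' clause I would invoke the Remark immediately after Theorem~\ref{compact->ell_p}, which upgrades the argument: if a Banach space $Y$ with a basis is f.r.\ in $X$ and $X^{\mathbb{N}} \sslash G$ is compact, then $X$ isometrically contains $Y$. Since compactness of $X^{\mathbb{N}} \sslash G_L$ has already been established, every such $Y$ embeds isometrically into $X$, which is exactly the second assertion.

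There is essentially no real obstacle here; the corollary is a formal consequence of the two theorems and the Remark, and the only points needing a little care are bookkeeping ones. One should verify that the cluster-point element $[x] \in X^{\mathbb{N}}\sslash G$ produced in the proof of Theorem~\ref{compact->ell_p} (and in the Remark) genuinely yields an isometric copy inside $X$ itself: a representative $(x_1,x_2,\ldots) \in [x]$ has all of its coordinates in $X$, and the estimates $(x_1,\ldots,x_n)\sim_{2\epsilon}(e_1,\ldots,e_n)$ holding for every $\epsilon>0$ force $(x_i)$ to be isometrically equivalent to the relevant basis, so the closed linear span of the $x_i$ is the desired subspace of $X$. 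One should also confirm that the Remark's argument uses nothing more than the existence of a basis for $Y$, so that the finite-dimensional blocks $\mathrm{span}(e_1,\ldots,e_n)$ exhaust $Y$ and the finitely-determined approximations can be patched together in the limit. With these routine checks, the corollary follows immediately.
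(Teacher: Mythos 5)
Your proposal is correct and follows exactly the paper's own route: Theorem~\ref{fr determined->compact} gives compactness of $X^{\Bbb N}\sslash G_L$, Theorem~\ref{compact->ell_p} then yields $c_0$ or $\ell_p$, and the remark following that theorem (i.e.\ repeating its argument with the basis of $Y$ in place of the $\ell_p$ basis) gives the ``moreover'' clause. The additional bookkeeping checks you mention are sensible but do not change the argument.
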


\begin{proof} Putting  \ref{compact->ell_p} and \ref{fr
determined->compact} together we get the first part. For the
general case, an argument similar to \ref{compact->ell_p} for any
such space $Y$ works well.
\end{proof}

\begin{Remark} \label{stable} {\em  (i) Recall that two spaces $X$ and $Y$ are called
{\em almost isometric} if for each $\lambda>1$, they are
$\lambda$-isomorphic, that is, there is a linear isomorphism
$T:X\to Y$ such that for all $x\in X$,
$\lambda^{-1}\|x\|\le\|T(x)\|\le \lambda\|x\|$.
 In the above, one can works with {\em almost} linear
isometries instead of linear isometries, and get similar results.
Indeed, we say that $\bar{x}, \bar{y}\in X^{\Bbb N}$ are in the
same class, and write $\bar{x}\sim\bar{y}$, if for each
$\lambda>1$ there is a linear $\lambda$-isomorphism such that
$T(\bar{x})=\bar{y}$. Now, if $X^{\Bbb N}\sslash\sim$ is compact
(with respect to the metric which is defined similiar to
Definition~\ref{oligomorphic}), then $X$ almost isometrically
contains any space $Y$ which has a basis and is f.r. in $X$. To
summarize, we say that a Banach space $X$ is {\em almost
determined by its finite dimensional subspaces} if for any space
$Y$ such that $X$ is f.r. in $Y$ and $Y$ is f.r. in $X$, we have
$X$ and $Y$ are almost isometric. So, every almost determined
space (almost isometrically) contains any space which is f.r. in
the space.
\newline
(ii) By a similar argument, one can show that
$\aleph_0$-categoricity implies compactness of $X^{\Bbb
N}\sslash\text{Aut}(X)$,  where $\text{Aut}(X)$ is the {\em group
of automorphisms  of structure $X$} (see \cite{BBHU} for the
definition). Note that $\text{Aut}(X)$ is a closed subgroup of
$G_L$. So,

\medskip
{\scriptsize finitely determined ~~$\Longrightarrow$~~
$\aleph_0$-categorical ~~$\Longrightarrow$~~  $X^{\Bbb
N}\sslash\text{Aut}(X)$ compact ~~$\Longrightarrow$~~ $X^{\Bbb
N}\sslash G_L$ compact  }
\medskip
\newline
(iii) {\bf (stability):} By a similar argument one can show that a
stable Banach space (in the sense of Krivine and Maurey) almost
isometrically contains some $\ell_p$. Indeed, for a space $X$,
let $X_{\cal U}$ be an ultrapower of $X$, and $G_L$ the ismoetry
group of $X_{\cal U}$. Then we consider the space $X_{\cal
U}\sslash G_L$. Let $A=\{[x]:x\in X^{\Bbb N}\}$. One can prove
that a space $X$ is stable iff the closure of $A$, denoted by
$\bar A$, is compact in $X_{\cal U}\sslash G_L$. Clearly, if
$\bar A$ is compact, then some $\ell_p$ is a limit of a {\em
sequence} in $A$, by Krivine's theorem.

}


\end{Remark}

\section{Model theoretic interpretations}
Recall that a separable Banach space $X$ is said to be {\em
determined by its first order theory} if it is the only separable
model of its first order theory in the sense of Continuous Logic
(see \cite{BBHU}). In this case, $X$ is called {\em
$\aleph_0$-categorical} or {\em separably categorical}.

\begin{Remark}  \label{Ultra & Keisler-Shelah}   {\em  (i) $X$ is f.r. in $Y$ if and only if there is an ultrafilter $\cal U$ such that $X$ is isometric to a subspace of
the ultrapower $Y_{\cal U}$. (See \cite{Hei}, Theorem~6.3.)
\newline
(ii) 
 If $X$ and $Y$ have the same first
order theory, denoted by $Th(X)=Th(Y)$ or $X\equiv Y$, then there
is an ultrafilter $\cal U$ such that the ultrapowers $X_{\cal U}$
and $Y_{\cal U}$ are isometric. (See \cite{BBHU}, Theorem~5.7.)  }
\end{Remark}

\begin{Corollary}  
\label{finite->categorical} Every separable space which is determined by its finite dimensional
subspaces is also determined by its first order theory,
equivalently, it is $\aleph_0$-categorical.
\end{Corollary}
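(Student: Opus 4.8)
The plan is to reduce the statement to a comparison between the two notions of ``sameness'' for separable spaces: having the same finite dimensional subspaces versus having the same first order theory. Concretely, suppose $X$ is determined by its finite dimensional subspaces, and let $Y$ be any separable space with $Th(Y)=Th(X)$; I want to conclude $Y$ is isometric to $X$, which is exactly the assertion that $X$ is $\aleph_0$-categorical.

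First I would invoke Remark~\ref{Ultra & Keisler-Shelah}(ii): since $X\equiv Y$, there is an ultrafilter $\cal U$ with $X_{\cal U}$ and $Y_{\cal U}$ isometric. The key observation is then that a space and its ultrapower are mutually finitely representable in each other: indeed $X$ embeds isometrically in $X_{\cal U}$ (the diagonal embedding), so $X$ is f.r.\ in $X_{\cal U}$, and conversely $X_{\cal U}$ is f.r.\ in $X$ by Theorem~6.3 of \cite{Hei} (equivalently Remark~\ref{Ultra & Keisler-Shelah}(i)). The same holds for $Y$ and $Y_{\cal U}$. Chaining these through the isometry $X_{\cal U}\cong Y_{\cal U}$, one gets that $X$ is f.r.\ in $Y_{\cal U}\cong X_{\cal U}$, hence f.r.\ in $X$ trivially; more to the point, $X$ is f.r.\ in $Y$ and $Y$ is f.r.\ in $X$: for the first, $X$ is f.r.\ in $X_{\cal U}\cong Y_{\cal U}$ and $Y_{\cal U}$ is f.r.\ in $Y$, so $X$ is f.r.\ in $Y$ by transitivity of finite representability; symmetrically $Y$ is f.r.\ in $X$. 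Then the hypothesis that $X$ is finitely determined forces $Y$ to be isometric to $X$, which is what we wanted.

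The main point to get right is the bookkeeping about finite representability: one needs that f.r.\ is a transitive relation (immediate from the $\varepsilon$/$\lambda$ definition quoted after Theorem~\ref{compact->ell_p}), and that isometric spaces are interchangeable in f.r.\ statements (also immediate). With those in hand the argument is a short diagram chase. The only genuine input beyond elementary manipulation is the pair of facts from \cite{Hei} and \cite{BBHU} recorded in Remark~\ref{Ultra & Keisler-Shelah}, so the ``hard part'' is really just recognizing that being finitely determined is a priori a \emph{stronger} closure condition than being $\aleph_0$-categorical --- finite representability sees only the finite dimensional subspaces, whereas elementary equivalence is about the full first order theory --- and then checking that the ultrapower characterizations make the two conditions line up. I would also remark that the converse fails in general (e.g.\ $\ell_p$ spaces are $\aleph_0$-categorical but the notion of finite determinacy is strictly stronger), though that is not needed for the corollary.
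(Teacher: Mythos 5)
Your argument is correct and is essentially the paper's own proof: the paper disposes of this corollary with the single line ``putting (i) and (ii) of the ultrapower/Keisler--Shelah remark together,'' and your write-up merely fills in the diagram chase (diagonal embedding of $X$ into $X_{\cal U}\cong Y_{\cal U}$, Heinrich's theorem, transitivity of finite representability) that the paper leaves implicit. No further comment is needed.
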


\begin{proof} Putting (i) and (ii) of Remark~\ref{Ultra & Keisler-Shelah} together the proof is completed.
\end{proof}

The converse of the above does not hold in general. For example,
$L_p[0,1]$ is determined by its theory but $\ell_p$ is f.r in
$L_p[0,1]$ and vice versa. In fact, $X$ is f.r. in $Y$ if and
only if $Th\exists(X)\subseteq Th(Y)$, where $Th\exists(X)$ is
the existential theory of $X$.

\begin{Fact}[Corollary~\ref{main1} revisited]  \label{main theorem} Suppose that $X$ is determined by its finite dimensional
subspaces. Then $X$ (isometrically) contains every (separable)
space that is f.r. in $X$.
\end{Fact}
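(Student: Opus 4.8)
The plan is to upgrade Corollary~\ref{main1} from ``every space with a basis'' to ``every separable space'' by removing the basis hypothesis. The key tool is already available: by Theorem~\ref{fr determined->compact}, since $X$ is finitely determined, $X^{\Bbb N}\sslash G_L$ is compact, where $G_L$ is the group of linear isometries of $X$. So it suffices to show that compactness of $X^{\Bbb N}\sslash G_L$ forces $X$ to isometrically contain \emph{every} separable $Y$ that is f.r.\ in $X$, with no assumption that $Y$ has a basis. The Remark after Theorem~\ref{compact->ell_p} handles the case where $Y$ has a basis; I would mimic that argument but replace the basis by a dense sequence.

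First I would fix a countable dense subset $\{y_1, y_2, \ldots\}$ of $Y$ (or a dense subset of the unit ball, then scale). For each $n$, since $Y$ is f.r.\ in $X$, the finite-dimensional subspace $Y_n = \mathrm{span}(y_1,\ldots,y_n)$ embeds $(1+\tfrac1n)$-isomorphically into $X$; pick vectors $x_{n,1},\ldots,x_{n,n}\in X$ that are the images of $y_1,\ldots,y_n$ under such an embedding, so that all linear combinations have norms matching those in $Y$ up to the factor $(1\pm\tfrac1n)$ — i.e., in a ``$\sim_{1/n}$''-type sense but now comparing against the \emph{actual} norms $\|\sum r_i y_i\|_Y$ rather than an $\ell_p$-norm. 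Then I would form $x_n = (x_{n,1},\ldots,x_{n,n}, z_{n+1}, z_{n+2},\ldots) \in X^{\Bbb N}$ with arbitrary tails $z_i$, pass to a cluster point $[x] = [(\bar x_1, \bar x_2, \ldots)]$ of $([x_n])_n$ in the compact space $X^{\Bbb N}\sslash G_L$, and argue exactly as in Theorem~\ref{compact->ell_p}: for each $\epsilon>0$ there is an $n$ with $d([x_n],[x])\le\epsilon$, hence (applying a linear isometry of $X$, which preserves all norm relations among linear combinations) the first $n$ coordinates of $\bar x$ are within a $(1+O(\epsilon))$ factor of $y_1,\ldots,y_n$ on every linear combination. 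Letting $\epsilon\to 0$ shows $\|\sum_{i=1}^n r_i \bar x_i\|_X = \|\sum_{i=1}^n r_i y_i\|_Y$ for all $n$ and all scalars.

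This gives a linear isometry from $\mathrm{span}\{y_1, y_2, \ldots\}$ onto $\mathrm{span}\{\bar x_1, \bar x_2, \ldots\}$; since $\{y_n\}$ is dense in $Y$, the former span is dense in $Y$, so the isometry extends to an isometric embedding of all of $Y$ into $X$. The main obstacle — and the reason the basis hypothesis was there in the first place — is bookkeeping the diagonal/cluster-point argument without a basis: with a basis one cleanly indexes the embedded vectors, but here one must be careful that the cluster point's coordinates $\bar x_i$ are \emph{simultaneously} close (after a single isometry) to $y_1,\ldots,y_n$ for each fixed $n$, which is exactly what membership in the orbit-closure $[x_n]$ of the full sequence, together with compactness in the product topology, delivers. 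I would also note the harmless point that one may take the $z_i$ in $x_n$ to all lie in a fixed bounded set (or even be constant) so that the sequences $x_n$ genuinely live in a fixed slice and the product-topology cluster-point argument goes through verbatim; the compactness of $X^{\Bbb N}\sslash G_L$ is what makes the finitely-many-coordinates-at-a-time approximations cohere into a single embedding.
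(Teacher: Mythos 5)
Your proof is correct, but it follows a genuinely different route from the paper's. The paper deduces this Fact from model theory: $Y$ f.r.\ in $X$ gives an isometric copy of $Y$ inside an ultrapower $X_{\mathcal U}$; the Downward L\"owenheim--Skolem theorem for continuous logic produces a separable $Z\subseteq X_{\mathcal U}$ containing $Y$ with $Z\equiv X$; and since finite determinacy implies $\aleph_0$-categoricity (Corollary~\ref{finite->categorical}), $Z\cong X$, so $X\supseteq Y$. You instead extend the direct compactness argument of Theorem~\ref{compact->ell_p} and Corollary~\ref{main1}, replacing the basis of $Y$ by a dense sequence $\{y_i\}$ and comparing linear combinations against the actual norms $\|\sum r_i y_i\|_Y$; this works, because $\mathrm{span}(y_1,\dots,y_n)$ is a legitimate finite-dimensional subspace to which the definition of finite representability applies, the norm identities you extract on $\mathrm{span}\{\bar x_i\}$ determine a well-defined linear isometry on $\mathrm{span}\{y_i\}$, and that span is dense in $Y$, so the isometry extends. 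Two small points of care, both at the same level of informality as the paper's own Theorem~\ref{compact->ell_p}: elements of the orbit closure $[x_n]$ are \emph{limits} of isometric images of $x_n$, not isometric images themselves, so you should note that the exact norm relations among the first $m$ coordinates pass to such limits; and closeness in the product metric controls the first $m$ coordinates only up to a constant depending on $m$, which is harmless since $m$ is fixed before $\epsilon\to 0$. What your route buys is a proof of the full Fact within the paper's stated program of avoiding model theory, and it shows the basis hypothesis in Corollary~\ref{main1} is removable: compactness of $X^{\Bbb N}\sslash G_L$ alone already yields the conclusion, which is formally a stronger statement than the Fact since (by Remark~\ref{stable}(ii)) that compactness is a consequence of finite determinacy. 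What the paper's route buys is brevity, given the L\"owenheim--Skolem machinery.
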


\begin{proof} This is a consequence of Downward
L\"{o}wenheim--Skolem Theorem for continuous logic. Indeed,
suppose that $Y$ is f.r. in $X$. Then, there is an ultrafilter
$\cal U$ such that $Y$ is isometric to a subspace of $X_{\cal
U}$. By Proposition~7.3 in \cite{BBHU}, there is a separable
subspace $Z$ of $X_{\cal U}$ such that $Z$ contains $Y$, and $X$
and $Z$ have the same first order theory. Now, since $X$ is
determined by its finite dimensional subspaces, by
Corollary~\ref{finite->categorical}, $X$ and $Z$ are isometric,
and so $X$ contains $Y$.
\end{proof}

\begin{Remark} {\em  (i) By Dvoretzky's theorem, the Hilbert space $\ell_2$
is finitely representable in every space. So every space which is
(almost) determined by its finite dimensional subspaces (almost)
isometrically contains $\ell_2$.
\newline
(ii) The Hilbert space $\ell_2$ is determined by its finite
dimensional
 subspaces.
\newline(iii) The argument of the proof of Fact~\ref{main theorem} is
similar to the argument due to Ward Henson for existence of
$\ell_2$. We thank professor Henson for communicating to us his
argument. On the other hand, Fact~\ref{main theorem} can be
considered as a consequence of the main theorem of \cite{K}.
\newline
(iv) The above observations (i.e. Theorems \ref{compact->ell_p},
\ref{fr determined->compact}) seem to be new for the Banach
spaces theorists. We thank  William Johnson for useful comments. }
\newline
\end{Remark}

\begin{Question} Is every (separable, infinite-dimensional) Banach space which is
(almost) determined by its finite dimensional subspaces (almost)
isometric to $\ell_2$? (We thank William Johnson and Timothy
Gowers for guiding us to this question.)
\end{Question}


\bigskip\noindent

{\bf Acknowledgements.} I want to thank  William Johnson, Ward
Henson and Timothy Gowers for useful comments.
 I would like to thank the Institute for Basic
Sciences (IPM), Tehran, Iran. Research partially supported by IPM
grants  93030032 and 93030059.

\end{document}